\patchcmd{\@maketitle}{\LARGE \@title}{\LARGE\bfseries\@title}{}{}
\renewcommand{\@seccntformat}[1]{\csname the#1\endcsname.\quad}
\definecolor{darkblue}{rgb}{0,0,.5}
\def\th@plain{%
	\thm@notefont{}
	\itshape 
}
\def\th@definition{%
	\thm@notefont{}
	\normalfont 
}
\renewenvironment{proof}[1][\proofname]{\par
	\normalfont
	\topsep0\p@\@plus3\p@ \trivlist
	\item[\hskip\labelsep\itshape
	#1\@addpunct{.}]\ignorespaces
}{%
	\qed\endtrivlist
}
\newtheorem{theorem}{Theorem}[section]
\newtheorem{lemma}[theorem]{Lemma}
\newtheorem{corollary}[theorem]{Corollary}
\newtheorem{proposition}[theorem]{Proposition}
\theoremstyle{definition}
\theoremstyle{definition}
\theoremstyle{definition}
\newtheorem{remark}[theorem]{Remark}
\setlist[enumerate]{nosep}
\newcommand{\menge}[2]{\{{#1}~\big |~{#2}\}} 
\newcommand{\Menge}[2]{\left\{{#1}~\Big|~{#2}\right\}} 
\newcommand{\scal}[2]{\left\langle {#1},{#2} \right\rangle}
\newcommand{\NN}{\ensuremath{\mathbb N}}
\newcommand{\nnn}{\ensuremath{{n\in{\mathbb N}}}}
\newcommand{\RR}{\ensuremath{\mathbb R}}
\newcommand{\RP}{\ensuremath{\mathbb{R}_+}}
\newcommand{\RPP}{\ensuremath{\mathbb{R}_{++}}}
\newcommand{\argmin}{\ensuremath{\operatorname*{argmin}}}
\newcommand{\ran}{\ensuremath{\operatorname{ran}}}
\newcommand{\zer}{\ensuremath{\operatorname{zer}}}
\newcommand{\dom}{\ensuremath{\operatorname{dom}}}
\newcommand{\gra}{\ensuremath{\operatorname{gra}}}
\newcommand{\Fix}{\ensuremath{\operatorname{Fix}}}
\newcommand{\Id}{\ensuremath{\operatorname{Id}}}
\newcommand{\prox}{\ensuremath{\operatorname{Prox}}}
\begin{document}

\title{Computing the resolvent of the sum of operators\\ with application to best approximation problems}

\author{
Minh N.\ Dao\thanks{CARMA, University of Newcastle, Callaghan, NSW 2308, Australia. 
E-mail: \texttt{daonminh@gmail.com}}
~and~
Hung M.\ Phan\thanks{Department of Mathematical Sciences, Kennedy College of Sciences, University of Massachusetts Lowell, Lowell, MA 01854, USA.
E-mail: \texttt{hung\char`_phan@uml.edu}.}}

\date{September 11, 2018}

\maketitle

\begin{abstract}
We propose a flexible approach for computing the resolvent of the sum of weakly monotone operators in real Hilbert spaces. This relies on splitting methods where strong convergence is guaranteed. We also prove linear convergence under Lipschitz continuity assumption. The approach is then applied to computing the proximity operator of the sum of weakly convex functions, and particularly to finding the best approximation to the intersection of convex sets.
\end{abstract}

{\small
\noindent{\bfseries AMS Subject Classifications:}
{Primary: 
47H05, 
49M27; 
Secondary:
47H10, 
65K05, 
65K10 
}

\noindent{\bfseries Keywords:}
Best approximation,
Douglas--Rachford algorithm, 
linear convergence, 
operator splitting,
Peaceman--Rachford algorithm,
projector,
proximity operator,
resolvent,
strong convergence.
}

\section{Introduction}

In this paper, we explore a straightforward path to the problem of \emph{computing the resolvent of the sum of two (not necessarily monotone) operators} using resolvents of individual operators. When applied to normal cones of convex sets, this computation solves the \emph{best approximation problem} of finding the projection onto the intersection of these sets.

In general, computations involved simultaneously two or more operators are usually difficult. One popular approach is to treat each operator individually, then use these calculations to construct the desired answer. Prominent examples of such splitting strategy include the \emph{Douglas--Rachford algorithm} \cite{DR56,LM79} and the \emph{Peaceman--Rachford algorithm} \cite{PR55} that apply to the problem of finding a zero of the sum of maximally monotone operators. In \cite{BC08}, the authors proposed an extension of \emph{Dykstra's algorithm} \cite{Dyk83} for constructing the resolvent of the sum of two maximally monotone operators. By product space reformulation, this problem was then handled in \cite{Com09} for finitely many operators. 
Recently, the so-called \emph{averaged alternating modified reflections algorithm} was used in \cite{AC18} to study this problem, and was soon after re-derived in \cite{ABMW18} from the view point of the proximal and resolvent average. Because computing the resolvent of a finite sum of operators can be transformed into that of the sum of two operators by a standard product space setting, as done in \cite{Com09,AC18}, we will focus our consideration to the case of two operators for reason of clarity.

\emph{The goal of this paper is to provide a flexible approach for computing the resolvent of the sum of two weakly monotone operators from individual resolvents.} Our work extends and complements recent results in this direction. We also present applications to computing the proximity operator of the sum of two weakly convex functions and to finding the best approximation to the intersection of two convex sets. 

The paper is organized as follows. In Section~\ref{s:prep}, we provide necessary materials. Section~\ref{s:main} contains our main results. Finally, applications are presented in Section~\ref{s:appl}.

\section{Preparation}
\label{s:prep}

We assume throughout that $X$ is a real Hilbert space with inner product $\scal{\cdot}{\cdot}$ and induced norm $\|\cdot\|$. 
The set of nonnegative integers is denoted by $\NN$, the set of real numbers by $\RR$, the set of nonnegative real numbers by $\RP := \menge{x \in \RR}{x \geq 0}$, and the set of the positive real numbers by $\RPP := \menge{x \in \RR}{x >0}$.
The notation $A\colon X\rightrightarrows X$ indicates that $A$ is a set-valued operator on $X$. 

Given an operator $A$ on $X$, its \emph{domain} is denoted by $\dom A :=\menge{x\in X}{Ax\neq \varnothing}$, its \emph{range} by $\ran A :=A(X)$, its \emph{graph} by $\gra A :=\menge{(x,u)\in X\times X}{u\in Ax}$, its set of \emph{zeros} by $\zer A :=\menge{x\in X}{0\in Ax}$, and its \emph{fixed point} set by $\Fix A :=\menge{x\in X}{x\in Ax}$. The \emph{inverse} of $A$, denoted by $A^{-1}$, is the operator with graph $\gra A^{-1} :=\menge{(u,x)\in X\times X}{u\in Ax}$.
Recall from \cite[Definition~3.1]{DP18} that an operator $A\colon X\rightrightarrows X$ is said to be \emph{$\alpha$-monotone} if $\alpha\in \RR$ and
\begin{equation}
\forall x,y\in\dom A,\quad
\scal{x-y}{Ax-Ay}\geq\alpha\|x-y\|^2.
\end{equation}
In this case, we say that $A$ is \emph{monotone} if $\alpha =0$, \emph{strongly monotone} if $\alpha >0$, and \emph{weakly monotone} if $\alpha <0$. The operator $A$ is said to be \emph{maximally $\alpha$-monotone} if it is $\alpha$-monotone and there is no $\alpha$-monotone operator $B\colon X\rightrightarrows X$ such that $\gra B$ properly contains $\gra A$.   

The \emph{resolvent} and the \emph{reflected resolvent} of $A\colon X\rightrightarrows X$ are respectively defined by
\begin{equation}
J_A:= (\Id+ A)^{-1} \quad\text{and}\quad R_A :=2J_A -\Id,
\end{equation}
where $\Id$ is the identity operator. 
We conclude this section by an elementary formula for computing the resolvent of special composition via resolvents of its components.

\begin{proposition}[resolvent of composition]
\label{p:transform}
Let $A\colon X\rightrightarrows X$, $q, r\in X$, $\theta\in \RPP$, and $\sigma\in \RR$. Define $\bar{A} :=A\circ(\theta\Id-q)+\sigma\Id-r$ and let $\gamma\in \RPP$. Then the following hold:
\begin{enumerate}
\item\label{p:transform_mono}
$A$ is (resp. maximally) $\alpha$-monotone if and only if $\bar{A}$ is (resp. maximally) $(\theta\alpha+\sigma)$-monotone.
\item\label{p:transform_resol} 
If $1+\gamma\sigma\neq 0$, then
\begin{equation}
J_{\gamma\bar{A}} =\frac{1}{\theta}\left(J_{\frac{\gamma\theta}{1+\gamma\sigma}A}\circ \left(\frac{\theta}{1+\gamma\sigma}\Id+\frac{\gamma\theta}{1+\gamma\sigma}r-q\right) +q\right); 
\end{equation}
and if, in addition, $A$ is maximally $\alpha$-monotone and $1+\gamma(\theta\alpha+\sigma) >0$, then $J_{\gamma\bar{A}}$ and $J_{\frac{\gamma\theta}{1+\gamma\sigma}A}$ are single-valued and have full domain.
\end{enumerate}
\end{proposition}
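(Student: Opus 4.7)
The plan is to handle part (i) by a direct monotonicity computation combined with an invertibility argument, and part (ii) by solving the defining inclusion of the resolvent via a change of variables.

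For part (i), I would fix $x,y\in\dom\bar A$ and write any $u\in\bar Ax$ as $u=a+\sigma x-r$ with $a\in A(\theta x-q)$, and similarly $v=b+\sigma y-r$ with $b\in A(\theta y-q)$. Since $(\theta x-q)-(\theta y-q)=\theta(x-y)$, the $\alpha$-monotonicity of $A$ gives $\scal{\theta(x-y)}{a-b}\geq\alpha\theta^2\|x-y\|^2$; dividing by $\theta>0$ and adding the contribution $\sigma\|x-y\|^2$ coming from $\sigma\Id$ yields $\scal{x-y}{u-v}\geq(\theta\alpha+\sigma)\|x-y\|^2$. The converse direction is obtained by observing that the map $A\mapsto\bar A$ is a bijection on set-valued operators whose inverse $B\mapsto B\circ(\theta^{-1}\Id+\theta^{-1}q)-\sigma\theta^{-1}\Id-\sigma\theta^{-1}q+r$ is of the same form; so both implications reduce to the same calculation. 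Maximality then transfers automatically: any proper $(\theta\alpha+\sigma)$-monotone extension of $\bar A$ pulls back through this inverse transformation to a proper $\alpha$-monotone extension of $A$, and conversely.

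For the formula in part (ii), note that $p=J_{\gamma\bar A}(z)$ is equivalent to $z\in p+\gamma A(\theta p-q)+\gamma\sigma p-\gamma r$, that is, $z+\gamma r\in(1+\gamma\sigma)p+\gamma A(\theta p-q)$. Substituting $w:=\theta p-q$ (so $p=(w+q)/\theta$) and multiplying through by $\tfrac{\theta}{1+\gamma\sigma}$, which is legitimate since $1+\gamma\sigma\ne 0$, rewrites the inclusion as $\tfrac{\theta(z+\gamma r)}{1+\gamma\sigma}-q\in w+\tfrac{\gamma\theta}{1+\gamma\sigma}Aw$. This is exactly $w=J_{\frac{\gamma\theta}{1+\gamma\sigma}A}\!\left(\tfrac{\theta}{1+\gamma\sigma}z+\tfrac{\gamma\theta}{1+\gamma\sigma}r-q\right)$, and recovering $p=(w+q)/\theta$ gives the stated expression.

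For the single-valued, full-domain claim, part (i) shows that $\bar A$ is maximally $(\theta\alpha+\sigma)$-monotone, and since $1+\gamma(\theta\alpha+\sigma)>0$, a Minty-type characterization for maximally $\alpha$-monotone operators (available in \cite{DP18}) guarantees that $J_{\gamma\bar A}$ is single-valued with full domain. The explicit formula just derived then transfers these two properties to $J_{\frac{\gamma\theta}{1+\gamma\sigma}A}$, because the affine map $z\mapsto\tfrac{\theta}{1+\gamma\sigma}z+\tfrac{\gamma\theta}{1+\gamma\sigma}r-q$ is a bijection of $X$ whenever $1+\gamma\sigma\ne 0$. The main obstacle is keeping the scalars organized; the only step with content beyond algebra is the maximality transfer in part (i), and everything else is bookkeeping around the change of variables $w=\theta p-q$.
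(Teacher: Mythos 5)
Your proof is correct and follows essentially the same route as the paper: your element-wise change of variables $w=\theta p-q$ in the inclusion defining $J_{\gamma\bar A}$ is exactly the paper's chain of operator-inverse identities written pointwise, and the final single-valued/full-domain claim is obtained, as in the paper, from the maximal $(\theta\alpha+\sigma)$-monotonicity of $\bar A$ together with $1+\gamma(\theta\alpha+\sigma)>0$ and the cited result of \cite{DP18}. The only additions are details the paper leaves implicit --- the converse and maximality transfer in (i) via the inverse affine transformation, and the transfer of single-valuedness and full domain to $J_{\frac{\gamma\theta}{1+\gamma\sigma}A}$ through the bijective affine substitution --- and both are sound.
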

\begin{proof} 
\ref{p:transform_mono}: This is straightforward from the definition.

\ref{p:transform_resol}: We note that $(\theta\Id-q)^{-1} =\frac{1}{\theta}(\Id+q)$, that $(T-z)^{-1} =T^{-1}\circ(\Id+z)$, and that $(\alpha T)^{-1} =T^{-1}\circ(\frac{1}{\alpha}\Id)$ for any operator $T$, any $z\in X$, and any $\alpha\in \RR\smallsetminus\{0\}$. Using these facts yields
\begin{subequations}
\begin{align}
J_{\gamma\bar{A}} &=\Big((1+\gamma\sigma)\Id +\gamma A\circ(\theta\Id-q) -\gamma r\Big)^{-1}\\
&=\left(\Big(\frac{1+\gamma\sigma}{\theta}(\Id+q) +\gamma A\Big)\circ (\theta\Id-q)\right)^{-1}\circ (\Id+\gamma r)\\
&=(\theta\Id-q)^{-1}\circ \left(\frac{1+\gamma\sigma}{\theta}\Id+\gamma A +\frac{1+\gamma\sigma}{\theta}q\right)^{-1}\circ (\Id+\gamma r)\\
&=(\theta\Id-q)^{-1}\circ \left(\frac{1+\gamma\sigma}{\theta}\Big(\Id+\frac{\gamma\theta}{1+\gamma\sigma}A\Big)\right)^{-1}\circ \left(\Id-\frac{1+\gamma\sigma}{\theta}q\right)\circ (\Id+\gamma r)\\
&=\frac{1}{\theta}(\Id+q)\circ \left(\Id+\frac{\gamma\theta}{1+\gamma\sigma} A\right)^{-1}\circ\left(\frac{\theta}{1+\gamma\sigma}\Id\right)\circ \left(\Id+\gamma r-\frac{1+\gamma\sigma}{\theta}q\right)\\
&=\frac{1}{\theta}(\Id+q)\circ J_{\frac{\gamma\theta}{1+\gamma\sigma}A}\circ \left(\frac{\theta}{1+\gamma\sigma}\Id+\frac{\gamma\theta}{1+\gamma\sigma}r-q\right)\\
&=\frac{1}{\theta}\left(J_{\frac{\gamma\theta}{1+\gamma\sigma}A}\circ \left(\frac{\theta}{1+\gamma\sigma}\Id+\frac{\gamma\theta}{1+\gamma\sigma}r-q\right) +q\right).
\end{align}
\end{subequations}
Since $A$ is maximally $\alpha$-monotone, $\bar{A}$ is maximally $(\theta\alpha+\sigma)$-monotone. Now, since $1+\gamma(\theta\alpha+\sigma) >0$, \cite[Proposition~3.4]{DP18} implies the conclusion.
\end{proof}

\section{Main results}
\label{s:main}

In this section, let $A, B\colon X\rightrightarrows X$, $\omega\in \RPP$, and $r\in X$.
We present a flexible approach to the computation of the resolvent at $r$ of the scaled sum $\omega(A+B)$, that is to
\begin{equation}
\text{compute $J_{\omega(A+B)}(r)$}. 
\end{equation}
Our analysis relies on the observation that this problem can be reformulated into the problem of finding a zero of the sum of two suitable operators. Indeed, when $r\in \dom J_{\omega(A+B)} =\ran\left(\Id+\omega(A+B)\right)$, we have by definition that
\begin{equation}
x\in J_{\omega(A+B)}(r) 
\iff r\in x +\omega(A+B)x
\iff 0\in (A+B)x +\frac{1}{\omega}x -\frac{1}{\omega}r.
\end{equation}
By writing $\frac{1}{\omega} =\sigma+\tau$ and $\frac{1}{\omega}r =r_A+r_B$, the last inclusion is equivalent to 
\begin{equation}
0\in (A+\sigma\Id-r_A)x +(B+\tau\Id-r_B)x, 
\end{equation}
which leads to finding a zero of the sum of two new operators $A+\sigma\Id-r_A$ and $B+\tau\Id-r_B$.

Based on this observation, we proceed with a more general formulation. Given $\theta\in\RPP$ and $q\in X$, we take $(\sigma,\tau)\in\RR^2$ and $(r_A,r_B)\in X^2$ satisfying 
\begin{equation}
\label{e:parameters}
\sigma+\tau =\frac{\theta}{\omega} \quad\text{and}\quad r_A+r_B =\frac{1}{\omega}(q+r),
\end{equation} 
and define 
\begin{equation}
\label{e:A'B'}
A_\sigma :=A\circ(\theta\Id-q)+\sigma\Id-r_A \quad\text{and}\quad B_\tau :=B\circ(\theta\Id-q)+\tau\Id-r_B.
\end{equation}
Now, we will derive the formula for the resolvent of the scaled sum via zeros of the sum of these newly defined operators.
\begin{proposition}[resolvent via zeros of sum of operators]
\label{p:resol=zer}
Suppose $r\in \ran\left(\Id+\omega(A+B)\right)$. Then
\begin{equation}
J_{\omega(A+B)}(r) =\theta\zer(A_\sigma+B_\tau)-q.
\end{equation}
Consequently, if $A_\sigma+B_\tau$ is strongly monotone, then $J_{\omega(A+B)}(r)$ and $\zer(A_\sigma+B_\tau)$ are singletons.
\end{proposition}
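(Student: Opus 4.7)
The plan is to prove the set equality by a direct chain of equivalences derived from the definitions, then deduce the singleton claim from the standard fact that a strongly monotone operator has at most one zero.

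First, I would unpack the zero condition. Fix $y \in X$ and set $x := \theta y - q$, so that $\theta y - q = x$ and $y = (x+q)/\theta$. From the definition \eqref{e:A'B'},
\begin{equation}
A_\sigma(y) + B_\tau(y) = A(x) + B(x) + (\sigma+\tau)y - (r_A + r_B).
\end{equation}
Substituting the two parameter identities \eqref{e:parameters}, namely $\sigma+\tau = \theta/\omega$ and $r_A + r_B = (q+r)/\omega$, and using $(\theta/\omega)y = (x+q)/\omega$, I would simplify the right-hand side to
\begin{equation}
A(x) + B(x) + \tfrac{1}{\omega}(x - r).
\end{equation}

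Next I would read off the equivalence. The display above shows that $0 \in A_\sigma(y) + B_\tau(y)$ if and only if $r \in x + \omega(A+B)(x)$, which by definition means $x \in J_{\omega(A+B)}(r)$. Thus the map $y \mapsto \theta y - q$ is a bijection from $\zer(A_\sigma + B_\tau)$ onto $J_{\omega(A+B)}(r)$, yielding the displayed identity $J_{\omega(A+B)}(r) = \theta\zer(A_\sigma + B_\tau) - q$. The assumption $r \in \ran(\Id + \omega(A+B))$ only serves to guarantee that both sets are nonempty; the bijection itself is purely formal.

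For the consequence, I would use the hypothesis that $A_\sigma + B_\tau$ is strongly monotone, say with modulus $\alpha > 0$: if $y_1, y_2 \in \zer(A_\sigma + B_\tau)$, then choosing $0 \in (A_\sigma + B_\tau)(y_i)$ gives $0 = \scal{y_1 - y_2}{0 - 0} \geq \alpha\|y_1 - y_2\|^2$, forcing $y_1 = y_2$. Together with nonemptiness from $r \in \ran(\Id + \omega(A+B))$, this makes $\zer(A_\sigma + B_\tau)$ a singleton, and the affine bijection above transports this to $J_{\omega(A+B)}(r)$. There is no real obstacle here—the entire argument is an algebraic rearrangement—so the only thing to be careful about is keeping the parameter substitutions in \eqref{e:parameters} straight when collapsing the sum $A_\sigma(y) + B_\tau(y)$.
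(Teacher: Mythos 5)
Your proposal is correct and follows essentially the same route as the paper: a chain of equivalences showing that $z\in\zer(A_\sigma+B_\tau)$ if and only if $\theta z-q\in J_{\omega(A+B)}(r)$, using exactly the substitutions \eqref{e:parameters}, followed by the uniqueness of zeros of a strongly monotone operator. The only cosmetic difference is that you prove the at-most-one-zero fact directly, where the paper cites \cite[Proposition~23.35]{BC17}.
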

\begin{proof}
For every $z\in X$, we derive from \eqref{e:parameters} and \eqref{e:A'B'} that
\begin{subequations}
\begin{align} 
\theta z-q\in J_{\omega(A+B)}(r) 
&\iff r\in (\theta z-q)+\omega(A+B)(\theta z-q)\\
&\iff 0\in (A+B)(\theta z-q) +\frac{\theta}{\omega}z -\frac{1}{\omega}(q+r)\\
&\iff 0\in (A+B)(\theta z-q) +(\sigma+\tau)z -(r_A+r_B)\\
&\iff 0\in \big(A(\theta z-q)+\sigma z-r_A\big)+ \big(B(\theta z-q)+\tau z-r_B\big)\\
&\iff z\in \zer(A_\sigma+B_\tau).
\end{align}
\end{subequations}
The remaining conclusion follows from \cite[Proposition~23.35]{BC17}.
\end{proof}

The new operators $A_\sigma$ and $B_\tau$ along with Proposition~\ref{p:resol=zer} allow for the flexibility in chosing $(\sigma,\tau)$ and $(r_A,r_B)$ as one can decide the values of these parameters as long as \eqref{e:parameters} is satisfied. We are now ready for our main result.

\begin{theorem}[resolvent of sum of $\alpha$- and $\beta$-monotone operators]
\label{t:resol}
Suppose that $A$ and $B$ are respectively maximally $\alpha$- and $\beta$-monotone with $\alpha+\beta >-1/\omega$, that $r\in \ran\left(\Id+\omega(A+B)\right)$, and that $(\sigma,\tau)$ satisfies
\begin{equation}
\label{e:sigma-tau}
\theta\alpha+\sigma>0 \quad\text{and}\quad \theta\beta+\tau\geq 0.
\end{equation}
Let $\gamma\in \RPP$ be such that $1+\gamma\sigma\neq 0$ and $1+\gamma\tau\neq 0$. Given any $\kappa\in \left]0,1\right]$ and $x_0\in X$, define the sequence $(x_n)_\nnn$ by  
\begin{equation}
\label{e:x+}
\forall\nnn, \quad x_{n+1} :=(1-\kappa)x_n +\kappa(2J_{\gamma B_\tau}-\Id)(2J_{\gamma A_\sigma}-\Id)x_n
\end{equation} 
with explicit formulas
\begin{subequations}
\label{e:explicit}
\begin{align}
J_{\gamma A_\sigma} &=\frac{1}{\theta}\left(J_{\frac{\gamma\theta}{1+\gamma\sigma}A}\circ \left(\frac{\theta}{1+\gamma\sigma}\Id+\frac{\gamma\theta}{1+\gamma\sigma}r_A-q\right) +q\right)\\
\text{and~} J_{\gamma B_\tau} &=\frac{1}{\theta}\left(J_{\frac{\gamma\theta}{1+\gamma\tau}B}\circ \left(\frac{\theta}{1+\gamma\tau}\Id+\frac{\gamma\theta}{1+\gamma\tau}r_B-q\right) +q\right).
\end{align}
\end{subequations}
Then $J_{\omega(A+B)}(r) =J_{\frac{\gamma\theta}{1+\gamma\sigma}A}\left(\frac{\theta}{1+\gamma\sigma}\overline{x}+\frac{\gamma\theta}{1+\gamma\sigma}r_A-q\right)$ with $\overline{x}\in \Fix(2J_{\gamma B_\tau}-\Id)(2J_{\gamma A_\sigma}-\Id)$ and the following hold:
\begin{enumerate}
\item\label{t:resol_strong} 
$\left(J_{\frac{\gamma\theta}{1+\gamma\sigma}A}\left(\frac{\theta}{1+\gamma\sigma}x_n+\frac{\gamma\theta}{1+\gamma\sigma}r_A-q\right)\right)_\nnn$ converges strongly to $J_{\omega(A+B)}(r)$.
\item\label{t:resol_weak} 
If $\kappa <1$, then $(x_n)_\nnn$ converges weakly to $\overline{x}$.
\item\label{t:resol_linear} 
If $A$ is Lipschitz continuous, then the convergences in \ref{t:resol_strong} and \ref{t:resol_weak} are linear.
\end{enumerate}  
\end{theorem}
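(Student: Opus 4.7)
The plan is to use Proposition~\ref{p:resol=zer} to reduce the computation of $J_{\omega(A+B)}(r)$ to finding a zero of $A_\sigma+B_\tau$, solve the latter via a relaxed Peaceman--Rachford/Douglas--Rachford splitting, and translate everything back through Proposition~\ref{p:transform}. To set up the splitting, I would first apply Proposition~\ref{p:transform}\ref{p:transform_mono} to deduce from~\eqref{e:sigma-tau} that $A_\sigma$ is maximally $(\theta\alpha+\sigma)$-monotone with $\theta\alpha+\sigma>0$ (strongly monotone) and $B_\tau$ is maximally $(\theta\beta+\tau)$-monotone with $\theta\beta+\tau\geq 0$ (monotone); such $(\sigma,\tau)$ exist precisely because $\alpha+\beta>-1/\omega$ forces $\theta(\alpha+\beta)+\sigma+\tau>0$. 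The explicit formulas~\eqref{e:explicit} are then direct substitutions into Proposition~\ref{p:transform}\ref{p:transform_resol}.

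Next I would exploit the classical identity
\begin{equation*}
\overline{x}\in\Fix(2J_{\gamma B_\tau}-\Id)(2J_{\gamma A_\sigma}-\Id)\iff J_{\gamma A_\sigma}\overline{x}\in\zer(A_\sigma+B_\tau),
\end{equation*}
which, combined with Proposition~\ref{p:resol=zer}, yields $J_{\omega(A+B)}(r)=\theta J_{\gamma A_\sigma}\overline{x}-q$; substituting the first line of~\eqref{e:explicit} produces the displayed expression. The hypothesis $r\in\ran(\Id+\omega(A+B))$ guarantees that a fixed point $\overline{x}$ exists, and strong monotonicity of $A_\sigma+B_\tau$ renders both $\zer(A_\sigma+B_\tau)$ and $J_{\omega(A+B)}(r)$ singletons.

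For the convergence statements, set $T:=(2J_{\gamma B_\tau}-\Id)(2J_{\gamma A_\sigma}-\Id)$. Strong monotonicity of $A_\sigma$ makes $R_{\gamma A_\sigma}$ a strict contraction and monotonicity of $B_\tau$ keeps $R_{\gamma B_\tau}$ nonexpansive, so $T$ is a strict contraction. Item~\ref{t:resol_weak} then follows from the theory of averaged nonexpansive operators applied to $(1-\kappa)\Id+\kappa T$ (cf.\ \cite{BC17}); item~\ref{t:resol_strong} follows by transporting convergence through $J_{\gamma A_\sigma}$ and then through the first formula of~\eqref{e:explicit}. For~\ref{t:resol_linear}, Lipschitz continuity of $A$ (hence $A_\sigma$) combined with strong monotonicity of $A_\sigma$ yields an explicit contraction rate for $T$ and so for $(1-\kappa)\Id+\kappa T$, producing linear convergence of $(x_n)_\nnn$; Lipschitz continuity of $J_{\gamma A_\sigma}$ transports the rate to the sequence in~\ref{t:resol_strong}.

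I expect the delicate step to be~\ref{t:resol_strong} when $\kappa<1$: the sequence $(x_n)_\nnn$ is only weakly convergent, and mere Lipschitz continuity of $J_{\gamma A_\sigma}$ cannot upgrade this to strong convergence of $(J_{\gamma A_\sigma}x_n)_\nnn$. The route I would take is to combine the asymptotic regularity $x_n-Tx_n\to 0$ of the $\kappa$-relaxed iteration with the strong-monotonicity estimate $\scal{u-\overline{u}}{a-\overline{a}}\geq(\theta\alpha+\sigma)\|u-\overline{u}\|^2$ for $(u,a),(\overline{u},\overline{a})\in\gra A_\sigma$, specialised to $u=J_{\gamma A_\sigma}x_n$, $\overline{u}=J_{\gamma A_\sigma}\overline{x}$, so that the resolvent identities $(x_n-u)/\gamma\in A_\sigma u$ and $(\overline{x}-\overline{u})/\gamma\in A_\sigma\overline{u}$ force $\|u-\overline{u}\|\to 0$ quantitatively. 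Everything else is bookkeeping via Proposition~\ref{p:transform}\ref{p:transform_resol}.
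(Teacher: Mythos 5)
Your overall architecture matches the paper's: reduce to $\zer(A_\sigma+B_\tau)$ via Proposition~\ref{p:resol=zer}, identify zeros with $J_{\gamma A_\sigma}(\Fix(2J_{\gamma B_\tau}-\Id)(2J_{\gamma A_\sigma}-\Id))$, and push the formulas through Proposition~\ref{p:transform}. (The paper simply cites \cite[Proposition~26.1(iii)(b), Theorem~26.11(iv)(b), Proposition~26.13]{BC17} and \cite[Theorem~4.8]{DP18} for the convergence claims rather than reproving them.) However, your convergence argument contains a genuine error: strong monotonicity of $A_\sigma$ does \emph{not} make $R_{\gamma A_\sigma}=2J_{\gamma A_\sigma}-\Id$ a strict contraction, so $T:=R_{\gamma B_\tau}R_{\gamma A_\sigma}$ is in general only nonexpansive. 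For a maximally $\mu$-strongly monotone $A_\sigma$ one only gets
\begin{equation*}
\|R_{\gamma A_\sigma}x-R_{\gamma A_\sigma}y\|^2\leq \|x-y\|^2-4\gamma\mu\,\|J_{\gamma A_\sigma}x-J_{\gamma A_\sigma}y\|^2,
\end{equation*}
and the subtracted term can be negligible compared with $\|x-y\|^2$ when $J_{\gamma A_\sigma}$ collapses distances. A concrete counterexample is $A_\sigma=N_{\{0\}}$ (strongly monotone for every modulus), for which $J_{\gamma A_\sigma}\equiv 0$ and $R_{\gamma A_\sigma}=-\Id$, an isometry; this is exactly the situation in the paper's best approximation application, where the reflected resolvents are reflections across convex sets. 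If $T$ were a strict contraction, Banach's theorem would give strong \emph{and linear} convergence of $(x_n)_\nnn$ unconditionally, contradicting the theorem's own structure (only weak convergence in \ref{t:resol_weak}, linear rates only under Lipschitz continuity in \ref{t:resol_linear}); your final paragraph, which concedes that $(x_n)_\nnn$ is merely weakly convergent, is inconsistent with this claim.

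The correct mechanism for \ref{t:resol_strong} is not asymptotic regularity but the Fej\'er-type telescoping obtained from the displayed inequality: with $\overline{x}=T\overline{x}$, $u_n:=J_{\gamma A_\sigma}x_n$, $\overline{u}:=J_{\gamma A_\sigma}\overline{x}$, and $\mu:=\theta\alpha+\sigma>0$, convexity of $\|\cdot\|^2$ and nonexpansiveness of $R_{\gamma B_\tau}$ give
\begin{equation*}
\|x_{n+1}-\overline{x}\|^2\leq \|x_n-\overline{x}\|^2-4\kappa\gamma\mu\,\|u_n-\overline{u}\|^2,
\end{equation*}
whence $\sum_\nnn\|u_n-\overline{u}\|^2<+\infty$ and $u_n\to\overline{u}$ strongly, valid for every $\kappa\in\left]0,1\right]$ including the Peaceman--Rachford case $\kappa=1$ (this is the content of the cited results in \cite{BC17}). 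Your treatment of \ref{t:resol_linear} is fine once repaired: \emph{with} Lipschitz continuity of $A_\sigma$ one has $\|J_{\gamma A_\sigma}x-J_{\gamma A_\sigma}y\|\geq\|x-y\|/(1+\gamma(\theta\ell+|\sigma|))$, which feeds back into the inequality above to make $R_{\gamma A_\sigma}$, and hence $T$, a genuine strict contraction; the paper instead invokes \cite[Theorem~4.8]{DP18}. The remaining bookkeeping (existence of $(\sigma,\tau)$, the identity $J_{\omega(A+B)}(r)=\theta J_{\gamma A_\sigma}\overline{x}-q$, and the explicit resolvent formulas) is correct and agrees with the paper.
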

\begin{proof}
We first note that the existence of $(\sigma,\tau)\in \RR^2$ satisfying \eqref{e:parameters} and \eqref{e:sigma-tau} is ensured since $\alpha+\beta >-1/\omega$. Next, \eqref{e:sigma-tau} implies that $1+\gamma(\theta\alpha+\sigma) >0$ and $1+\gamma(\theta\beta+\tau) >0$. Using Proposition~\ref{p:transform}, we derive that $A_\sigma$ and $B_\tau$ are respectively maximally $(\theta\alpha+\sigma)$- and $(\theta\beta+\tau)$-monotone and that $J_{\gamma A_\sigma}$ and $J_{\gamma B_\tau}$ are single-valued and have full domain. It then follows from \eqref{e:sigma-tau} that $A_\sigma$ and also $A_\sigma+B_\tau$ are strongly monotone. By Proposition~\ref{p:resol=zer} and \cite[Proposition~26.1(iii)(b)]{BC17}, $\zer(A_\sigma+B_\tau) =\{J_{\gamma A_\sigma}(\overline{x})\}$ with $\overline{x}\in \Fix(2J_{\gamma B_\tau}-\Id)(2J_{\gamma A_\sigma}-\Id)$ and $J_{\omega(A+B)}(r) =\theta J_{\gamma A_\sigma}(\overline{x}) -q$. 

Now, Proposition~\ref{p:transform}\ref{p:transform_resol} implies \eqref{e:explicit}, which yields
\begin{equation}
\label{e:Asigma-A}
\theta J_{\gamma A_\sigma} -q =J_{\frac{\gamma\theta}{1+\gamma\sigma}A}\circ \left(\frac{\theta}{1+\gamma\sigma}\Id+\frac{\gamma\theta}{1+\gamma\sigma}r_A-q\right).
\end{equation}
Therefore,
\begin{equation}
\label{e:resol=shadow}
J_{\omega(A+B)}(r) =\theta J_{\gamma A_\sigma}(\overline{x}) -q =J_{\frac{\gamma\theta}{1+\gamma\sigma}A}\left(\frac{\theta}{1+\gamma\sigma}\overline{x}+\frac{\gamma\theta}{1+\gamma\sigma}r_A-q\right).
\end{equation}    

\ref{t:resol_strong}: By apply \cite[Theorem~26.11(iv)(b)]{BC17} with all $\lambda_n =\kappa$ if $\kappa <1$ and apply \cite[Proposition~26.13]{BC17} if $\kappa =1$, we obtain that $J_{\gamma A_\sigma}(x_n)\to J_{\gamma A_\sigma}(\overline{x})$. Now combine with \eqref{e:Asigma-A} and \eqref{e:resol=shadow}.

\ref{t:resol_weak}: Again apply \cite[Theorem~26.11]{BC17} with all $\lambda_n =\kappa$.

\ref{t:resol_linear}: Assume that $A$ is Lipschitz continuous with constant $\ell$. It is straightforward to see that $A_\sigma$ is Lipschitz continuous with constant $(\theta\ell+|\sigma|)$. The conclusion follows from \cite[Theorem~4.8]{DP18} with $\lambda =\mu =2$ and $\delta =\gamma$.  
\end{proof}

\begin{remark}
\label{r:resol}
Some remarks regarding Theorem~\ref{t:resol} are in order.
\begin{enumerate}
\item 
If $A$ is $\alpha$-monotone with $\alpha\in \RP$ and maximally monotone, then it is maximally $\alpha$-monotone (see \cite[Lemma~3.2]{DP18}). In \cite[Proposition~26.1(iii), Theorem~26.11, and Proposition~26.13]{BC17}, the maximal monotonicity assumption is needed only to ensure the full domain of resolvents, which already holds due to our assumptions on the choice of $\gamma$ and that $A$ and $B$ are respectively maximally $\alpha$- and $\beta$-monotone.
\item\label{r:resol_max} 
Under the assumptions made, $A+B$ is $(\alpha+\beta)$-monotone but not necessarily maximal. If in addition $A+B$ is indeed maximally $(\alpha+\beta)$-monotone, then $J_{\omega(A+B)}$ has full domain
by \cite[Proposition~3.4(ii)]{DP18}; thus, the condition on $r$ can be removed.
\item 
The iterative scheme \eqref{e:x+} is the Douglas--Rachford algorithm if $\kappa =1/2$ and the Peaceman--Rachford algorithm if $\kappa =1$. For a more general version of \eqref{e:x+}, we refer the readers to \cite{DP18}; see also \cite{DP18mor,DP18jogo}.
\item 
If the condition \eqref{e:sigma-tau} is replaced by 
\begin{equation}
\alpha+\sigma\geq 0 \quad\text{and}\quad \beta+\tau >0,
\end{equation}
then the conclusions of Theorem~\ref{t:resol}\ref{t:resol_weak}--\ref{t:resol_linear} still hold true, while Theorem~\ref{t:resol}\ref{t:resol_strong} only holds for $\kappa <1$; see also \cite[Theorem~2.1(ii) and Remark~2.2(iv)]{Com09}.
\item One can simply choose $\theta =1$ and $q =0$, in which case, \eqref{e:explicit} reduces to
\begin{equation}
J_{\gamma A_\sigma} =J_{\frac{\gamma}{1+\gamma\sigma}A}\circ\frac{1}{1+\gamma\sigma}(\Id+\gamma r_A)
\quad\text{and}\quad
J_{\gamma B_\tau} =J_{\frac{\gamma}{1+\gamma\tau}B}\circ\frac{1}{1+\gamma\tau}(\Id+\gamma r_B).
\end{equation} 
\item 
When $A$ and $B$ are maximally monotone, i.e., $\alpha =\beta =0$, \eqref{e:sigma-tau} is satisfied whenever $\sigma >0$ and $\tau\geq 0$. One thus can choose for instance $\sigma =\tau =\frac{\theta}{2\omega}$.
\item 
It is always possible to find $\gamma\in\RPP$ satisfying even $1+\gamma\sigma >0$ and $1+\gamma\tau >0$. In fact, these inequalities are automatic regardless of $\gamma\in\RPP$ as long as $\sigma$ and $\tau$ are both nonnegative.
\end{enumerate}
\end{remark}

When $A$ and $B$ are maximally monotone, the following result gives an iterative method for computing the resolvent of $A+B$ in which each iteration \emph{only} requires the computations in terms of $J_A$ and $J_B$.  
\begin{proposition}[resolvent of sum of two maximally monotone operators]
\label{p:maxmono}
Suppose that $A$ and $B$ are maximally monotone, that $\omega\neq 1/2$, and that $r\in \ran\left(\Id+\omega(A+B)\right)$. Define 
\begin{subequations}
\begin{align}
\bar{A} &:=\frac{2\omega}{\theta(2\omega-1)}A\circ(\theta\Id-q) +\frac{1}{\theta(2\omega-1)}(\theta\Id-q-r)\\ 
\text{and~}
\bar{B} &:=\frac{2\omega}{\theta(2\omega-1)}B\circ(\theta\Id-q) +\frac{1}{\theta(2\omega-1)}(\theta\Id-q-r).
\end{align}
\end{subequations}
Let $\kappa\in \left]0,1\right]$, let $x_0\in X$, and define the sequence $(x_n)_\nnn$ by
\begin{equation}
\forall\nnn, \quad x_{n+1} :=(1-\kappa)x_n +\kappa(2J_{\bar{B}}-\Id)(2J_{\bar{A}}-\Id)x_n
\end{equation} 
with explicit formulas
\begin{subequations}
\label{e:explicit'}
\begin{align}
J_{\bar{A}} &=\frac{1}{\theta}\left(J_A\circ\left(\Big(1-\frac{1}{2\omega}\Big)(\theta\Id-q) +\frac{1}{2\omega}r\right) +q\right)\\
\text{and~} 
J_{\bar{B}} &=\frac{1}{\theta}\left(J_B\circ\left(\Big(1-\frac{1}{2\omega}\Big)(\theta\Id-q) +\frac{1}{2\omega}r\right) +q\right). 
\end{align}
\end{subequations}
Then $J_{\omega(A+B)}(r) =J_A\left((1-\frac{1}{2\omega})(\theta\overline{x}-q)+\frac{1}{2\omega}r\right)$ with $\overline{x}\in \Fix(2J_{\bar{B}}-\Id)(2J_{\bar{A}}-\Id)$ and the following hold:
\begin{enumerate}
\item
$\left(J_A\left((1-\frac{1}{2\omega})(\theta x_n-q)+\frac{1}{2\omega}r\right)\right)_\nnn$ converges strongly to $J_{\omega(A+B)}(r)$.
\item
If $\kappa <1$, then $(x_n)_\nnn$ converges weakly to $\overline{x}$.
\item 
If $A$ is Lipschitz continuous, then the above convergences are linear.
\end{enumerate} 
\end{proposition}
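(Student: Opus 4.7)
The plan is to obtain Proposition~\ref{p:maxmono} as the specialization of Theorem~\ref{t:resol} to $\alpha=\beta=0$, with the free parameters in \eqref{e:parameters} tuned so that the inner resolvents $J_{\frac{\gamma\theta}{1+\gamma\sigma}A}$ and $J_{\frac{\gamma\theta}{1+\gamma\tau}B}$ appearing in \eqref{e:explicit} collapse to $J_A$ and $J_B$ themselves. Forcing $\frac{\gamma\theta}{1+\gamma\sigma}=\frac{\gamma\theta}{1+\gamma\tau}=1$ gives $\sigma=\tau$, and combined with $\sigma+\tau=\theta/\omega$ this singles out
\begin{equation}
\sigma=\tau=\frac{\theta}{2\omega},\qquad \gamma=\frac{1}{\theta-\sigma}=\frac{2\omega}{\theta(2\omega-1)},
\end{equation}
which is well defined exactly because $\omega\neq 1/2$. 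I pair this with the symmetric choice $r_A=r_B=\frac{q+r}{2\omega}$, so that $r_A+r_B=\frac{q+r}{\omega}$ as required.

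The central algebraic step is to verify the identifications $\bar A=\gamma A_\sigma$ and $\bar B=\gamma B_\tau$ for the operators $A_\sigma,B_\tau$ defined in \eqref{e:A'B'}. Using $\gamma\sigma=\frac{1}{2\omega-1}$ and $\gamma r_A=\frac{q+r}{\theta(2\omega-1)}$, a direct expansion of $\gamma A_\sigma=\gamma A\circ(\theta\Id-q)+\gamma\sigma\Id-\gamma r_A$ reproduces the displayed definition of $\bar A$ term by term, and the same computation works for $\bar B$. Substituting the chosen parameters into \eqref{e:explicit} and simplifying $\frac{\theta}{1+\gamma\sigma}=\theta(1-\frac{1}{2\omega})$, $\frac{\gamma\theta}{1+\gamma\sigma}r_A=r_A$, together with $\theta(1-\frac{1}{2\omega})\Id+\frac{q+r}{2\omega}-q=(1-\frac{1}{2\omega})(\theta\Id-q)+\frac{r}{2\omega}$, reproduces \eqref{e:explicit'}; the same simplification applied to the shadow $\theta J_{\gamma A_\sigma}(\overline x)-q$ delivers the stated representation of $J_{\omega(A+B)}(r)$.

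To finish, I would check the remaining hypotheses of Theorem~\ref{t:resol}: since $A,B$ are maximally monotone they are maximally $0$-monotone, so $\alpha+\beta=0>-1/\omega$, $\theta\alpha+\sigma=\frac{\theta}{2\omega}>0$, $\theta\beta+\tau=\frac{\theta}{2\omega}\geq 0$, and $1+\gamma\sigma=1+\gamma\tau=\frac{2\omega}{2\omega-1}\neq 0$. The three convergence statements (i)--(iii) then transfer verbatim from the corresponding conclusions of Theorem~\ref{t:resol} applied to $A_\sigma,B_\tau$, with Lipschitz continuity of $A$ passing trivially to $A_\sigma$. The main obstacle I expect is simply the algebraic bookkeeping linking $\bar A,\bar B$ to $\gamma A_\sigma,\gamma B_\tau$ and the simplification of \eqref{e:explicit} into \eqref{e:explicit'}; a subtler point worth monitoring is that the construction yields $\gamma>0$ only when $\omega>1/2$, so the regime $0<\omega<1/2$ requires either extending Theorem~\ref{t:resol} to admit $\gamma<0$ (the nonvanishing condition $1+\gamma\sigma\neq 0$ still holds) or a separate argument for that case.
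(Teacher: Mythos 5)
Your proposal is correct and coincides with the paper's own proof: the same choices $\sigma=\tau=\tfrac{\theta}{2\omega}$, $r_A=r_B=\tfrac{1}{2\omega}(q+r)$, $\gamma=\tfrac{2\omega}{\theta(2\omega-1)}$, the identifications $\bar{A}=\gamma A_\sigma$ and $\bar{B}=\gamma B_\tau$ via $1+\gamma\sigma=\gamma\theta$, and an appeal to Theorem~\ref{t:resol} with $\alpha=\beta=0$. Your closing caveat is well taken and is in fact a point the paper glosses over: its proof simply asserts $\gamma=\tfrac{2\omega}{\theta(2\omega-1)}>0$, which holds only when $\omega>1/2$, so the regime $0<\omega<1/2$ (permitted by the hypothesis $\omega\neq 1/2$) is not actually covered by the argument as written --- a gap in the paper's proof rather than in yours.
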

\begin{proof}
Choosing
\begin{equation}
\sigma =\tau =\frac{\theta}{2\omega} >0,\quad r_A =r_B =\frac{1}{2\omega}(q+r), \quad\text{and}\quad \gamma =\frac{2\omega}{\theta(2\omega-1)} >0, 
\end{equation}
we have that \eqref{e:parameters} is satisfied and that
\begin{equation}
A_\sigma =A\circ(\theta\Id-q) +\frac{1}{2\omega}(\theta\Id-q-r)
\text{~and~}
B_\tau =B\circ(\theta\Id-q) +\frac{1}{2\omega}(\theta\Id-q-r),
\end{equation}
which yields $\gamma A_\sigma =\bar{A}$ and $\gamma B_\tau =\bar{B}$. Since $1+\gamma\sigma =1+\gamma\theta/(2\omega) =2\omega/(2\omega-1) =\gamma\theta$, we get \eqref{e:explicit'} from \eqref{e:explicit}. Now apply Theorem~\ref{t:resol} with $\alpha =\beta =0$.
\end{proof}

Thanks to the flexibility of parameters, our results recapture the formulation and convergence analysis of recent methods. In particular, Corollaries~\ref{c:q=-r} and \ref{c:q=cr} are in the spirit of \cite[Theorem~3.1]{AC18} and \cite[Theorem~3.2]{ABMW18}, respectively.

\begin{corollary}
\label{c:q=-r}
Let $\eta\in \left]0,1\right[$ and $\gamma\in \RPP$. Suppose that $A$ and $B$ are maximally monotone and that $r\in \ran\left(\Id+\frac{\gamma}{2(1-\eta)}(A+B)\right)$. 
Let $\kappa\in \left]0,1\right]$, let $x_0\in X$, and define the sequence $(x_n)_\nnn$ by
\begin{equation}
\forall\nnn, \quad x_{n+1} :=(1-\kappa)x_n +\kappa\Big(2\eta J_{\gamma B}(\Id+r) -2\eta r -\Id\Big)\Big(2\eta J_{\gamma A}(\Id+r) -2\eta r -\Id\Big)x_n.
\end{equation} 
Then $J_{\frac{\gamma}{2(1-\eta)}(A+B)}(r) =J_{\gamma A}(\overline{x}+r)$ with $\overline{x}\in \Fix(2\eta J_{\gamma B}(\Id+r) -2\eta r -\Id)(2\eta J_{\gamma A}(\Id+r) -2\eta r -\Id)$ and the following hold:
\begin{enumerate}
\item
$\left(J_{\gamma A}(x_n+r)\right)_\nnn$ converges strongly to $J_{\frac{\gamma}{2(1-\eta)}(A+B)}(r)$.
\item
If $\kappa <1$, then $(x_n)_\nnn$ converges weakly to $\overline{x}$.
\item
If $A$ is Lipschitz continuous, then the above convergences are linear.
\end{enumerate} 
\end{corollary}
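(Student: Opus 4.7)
The plan is to derive Corollary~\ref{c:q=-r} as a direct specialization of Theorem~\ref{t:resol}, choosing the free parameters so that the abstract iteration in \eqref{e:x+} collapses to the concrete form in the corollary and so that the shadow sequence gives $J_{\gamma A}(\cdot + r)$ rather than a composite with affine shifts. Since $\omega=\gamma/(2(1-\eta))$ and $A,B$ are maximally monotone (so $\alpha=\beta=0$), the standing hypothesis $\alpha+\beta>-1/\omega$ and the inclusion $r\in\ran(\Id+\omega(A+B))$ of the theorem are immediately satisfied.

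First I would match coefficients. To make $2J_{\gamma A_\sigma}-\Id$ equal $2\eta J_{\gamma A}(\Id+r)-2\eta r-\Id$ using \eqref{e:explicit}, read off: the prefactor $1/\theta$ must produce the weight $\eta$, so $\theta=1/\eta$; the constant term $q/\theta$ must produce $-\eta r$, so $q=-r$; the inner scaling $\theta/(1+\gamma'\sigma)$ (where $\gamma'$ denotes the role played by the theorem's $\gamma$) must equal $1$; the inner shift $\frac{\gamma'\theta}{1+\gamma'\sigma}r_A-q$ must equal $r$; and the inner resolvent index $\frac{\gamma'\theta}{1+\gamma'\sigma}$ must equal $\gamma$. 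The natural solution (and the symmetric one for $B$) is
\begin{equation*}
\theta=\tfrac{1}{\eta},\quad q=-r,\quad r_A=r_B=0,\quad \sigma=\tau=\tfrac{1-\eta}{\eta\gamma},\quad \gamma'=\gamma.
\end{equation*}

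Next I would verify the required hypotheses of Theorem~\ref{t:resol}. Condition \eqref{e:parameters} reduces to $\sigma+\tau=2(1-\eta)/(\eta\gamma)=\theta/\omega$ and $r_A+r_B=0=(q+r)/\omega$, both clear. Condition \eqref{e:sigma-tau} becomes $\sigma>0$ and $\tau\geq 0$, which holds since $\eta\in\left]0,1\right[$. Also $1+\gamma\sigma=1/\eta\neq 0$. Substituting these values into \eqref{e:explicit} produces $J_{\gamma A_\sigma}=\eta J_{\gamma A}(\Id+r)-\eta r$ (and analogously for $B$), so the reflected resolvents are exactly the two factors appearing in the corollary's iteration, and the identity
\begin{equation*}
J_{\omega(A+B)}(r)=J_{\frac{\gamma\theta}{1+\gamma\sigma}A}\!\left(\tfrac{\theta}{1+\gamma\sigma}\overline{x}+\tfrac{\gamma\theta}{1+\gamma\sigma}r_A-q\right)=J_{\gamma A}(\overline{x}+r)
\end{equation*}
becomes the claimed shadow formula.

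Finally, the three convergence statements (strong convergence of the shadow sequence, weak convergence of $(x_n)$ when $\kappa<1$, and linear convergence under Lipschitz continuity of $A$) transfer verbatim from Theorem~\ref{t:resol}\ref{t:resol_strong}--\ref{t:resol_linear} after substituting the same parameter choice into the shadow expression $J_{\frac{\gamma\theta}{1+\gamma\sigma}A}(\frac{\theta}{1+\gamma\sigma}x_n+\frac{\gamma\theta}{1+\gamma\sigma}r_A-q)=J_{\gamma A}(x_n+r)$. There is no real obstacle here beyond the parameter bookkeeping; the only point that deserves a word of care is the name clash between the $\gamma$ of Theorem~\ref{t:resol} and the $\gamma$ of Corollary~\ref{c:q=-r}, which I would resolve simply by stating at the outset that the theorem is applied with its resolvent parameter set equal to the corollary's $\gamma$.
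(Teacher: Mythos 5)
Your proposal is correct and follows essentially the same route as the paper's own proof: both specialize Theorem~\ref{t:resol} with $\omega=\frac{\gamma}{2(1-\eta)}$, $\theta=\frac{1}{\eta}$, $q=-r$, $r_A=r_B=0$, $\sigma=\tau=\frac{1-\eta}{\gamma\eta}$, and use $1+\gamma\sigma=\frac{1}{\eta}=\theta$ to reduce \eqref{e:explicit} to $J_{\gamma A_\sigma}=\eta\left(J_{\gamma A}(\Id+r)-r\right)$. The only difference is expository: you motivate the parameter choice by coefficient matching, whereas the paper states it directly.
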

\begin{proof}
Let $\omega =\frac{\gamma}{2(1-\eta)}$, $\theta =\frac{1}{\eta}$, $\sigma =\tau =\frac{\theta}{2\omega} =\frac{1-\eta}{\gamma\eta}$, $q =-r$, and $r_A =r_B =0$. Then \eqref{e:parameters} is satisfied,  
\begin{equation}
A_\sigma =A\circ\left(\frac{1}{\eta}\Id +r\right) +\frac{1-\eta}{\gamma\eta}\Id \quad\text{and}\quad
B_\tau =B\circ\left(\frac{1}{\eta}\Id +r\right) +\frac{1-\eta}{\gamma\eta}\Id. 
\end{equation} 
Noting $1+\gamma\sigma =1+(1-\eta)/\eta =1/\eta =\theta$, we have from \eqref{e:explicit} that
\begin{equation}
J_{\gamma A_\sigma} =\eta\left(J_{\gamma A}(\Id +r) -r\right) \quad\text{and}\quad 
J_{\gamma B_\tau} =\eta\left(J_{\gamma B}(\Id +r) -r\right).
\end{equation}
Applying Theorem~\ref{t:resol} with $\alpha =\beta =0$ completes the proof.
\end{proof}

\begin{corollary}
\label{c:q=cr}
Suppose that $A$ and $B$ are maximally monotone and that $A+B$ is also maximally monotone. Let $\eta\in \left]0,1\right[$, $\kappa\in \left]0,1\right]$, $x_0\in X$, and define the sequence $(x_n)_\nnn$ by
\begin{equation}
\forall\nnn, \quad x_{n+1} :=(1-\kappa)x_n +\kappa\Big(2\eta J_B +2(1-\eta)r -\Id\Big)\Big(2\eta J_A +2(1-\eta)r -\Id\Big)x_n.
\end{equation} 
Then $J_{\frac{1}{2(1-\eta)}(A+B)}(r) =J_A(\overline{x})$ with $\overline{x}\in \Fix(2\eta J_B +2(1-\eta)r -\Id)(2\eta J_A +2(1-\eta)r -\Id)$ and the following hold:
\begin{enumerate}
\item
$(J_A(x_n))_\nnn$ converges strongly to $J_{\frac{1}{2(1-\eta)}(A+B)}(r)$.
\item
If $\kappa <1$, then $(x_n)_\nnn$ converges weakly to $\overline{x}$.
\item
If $A$ is Lipschitz continuous, then the above convergences are linear.
\end{enumerate} 
\end{corollary}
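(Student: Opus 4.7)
The plan is to specialize Theorem~\ref{t:resol} with $\alpha=\beta=0$ to a parameter choice for which the two ``tilted'' resolvents $J_{\gamma A_\sigma}$ and $J_{\gamma B_\tau}$ collapse to affine expressions in $J_A$ and $J_B$ respectively, with no internal scaling or translation of the argument of $J_A$ or $J_B$. The choice that makes this happen is
\[
\omega=\tfrac{1}{2(1-\eta)},\ \ \theta=\tfrac{1}{\eta},\ \ q=\tfrac{1-\eta}{\eta}r,\ \ r_A=r_B=\tfrac{1-\eta}{\eta}r,\ \ \sigma=\tau=\tfrac{1-\eta}{\eta},\ \ \gamma=1.
\]

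First I would verify admissibility. A direct computation gives $\sigma+\tau=2(1-\eta)/\eta=\theta/\omega$ and $r_A+r_B=2(1-\eta)r/\eta=(q+r)/\omega$, so \eqref{e:parameters} holds, and $\sigma,\tau>0$ makes \eqref{e:sigma-tau} trivial with $\alpha=\beta=0$.

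Next I would exploit the algebraic coincidence $1+\gamma\sigma=1/\eta$, which forces $\frac{\gamma\theta}{1+\gamma\sigma}=1$, $\frac{\theta}{1+\gamma\sigma}=1$, and $\frac{\gamma\theta}{1+\gamma\sigma}r_A-q=0$ simultaneously. Substituting these into \eqref{e:explicit} collapses the resolvent formulas to
\[
J_{\gamma A_\sigma}=\eta J_A+(1-\eta)r \quad\text{and}\quad J_{\gamma B_\tau}=\eta J_B+(1-\eta)r,
\]
so that $2J_{\gamma A_\sigma}-\Id=2\eta J_A+2(1-\eta)r-\Id$ and similarly for $B$; these are exactly the two operators that compose in the iteration stated in the corollary. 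The same substitution turns the shadow identity \eqref{e:resol=shadow} into $J_{\omega(A+B)}(r)=J_A(\overline{x})$.

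The only loose end is that Theorem~\ref{t:resol} demands $r\in\ran(\Id+\omega(A+B))$, a hypothesis absent from the corollary; this is precisely what the extra assumption that $A+B$ is maximally monotone provides, via Remark~\ref{r:resol}\ref{r:resol_max}. Items (i)--(iii) of the corollary then transfer verbatim from the corresponding parts of Theorem~\ref{t:resol}. I do not expect a genuine obstacle: the entire proof is parameter bookkeeping, and the only mildly clever step is guessing the six-parameter choice above that drives the three relevant constants in \eqref{e:explicit} to $1$, $1$, and $0$ at once.
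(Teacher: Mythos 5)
Your proof is correct and is essentially the paper's argument: the paper routes through Proposition~\ref{p:maxmono} with $\omega=\tfrac{1}{2(1-\eta)}$, $\theta=\tfrac{1}{\eta}$, $q=\tfrac{1-\eta}{\eta}r$, whose internal parameters ($\sigma=\tau=\theta/(2\omega)$, $r_A=r_B=(q+r)/(2\omega)$, $\gamma=2\omega/(\theta(2\omega-1))$) evaluate to exactly your six-parameter choice, and it likewise invokes Remark~\ref{r:resol}\ref{r:resol_max} to dispense with the range condition. Instantiating Theorem~\ref{t:resol} directly, as you do, is the same computation without the intermediate proposition.
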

\begin{proof}
Apply Proposition~\ref{p:maxmono} with $\omega =\frac{1}{2(1-\eta)}$, $\theta =\frac{1}{\eta}$, and $q =\frac{1-\eta}{\eta}r =\frac{1}{2\omega-1}r$ and note that $J_{\frac{1}{2(1-\eta)}(A+B)}$ has full domain due to Remark~\ref{r:resol}\ref{r:resol_max}.
\end{proof}

\section{Applications}
\label{s:appl}

In this section, we provide transparent applications of our result to computing the proximity operator of the sum of two weakly convex functions and to finding the closest point in the intersection of closed convex sets. 

We recall that a function $f\colon X\to \left]-\infty,+\infty\right]$ is \emph{proper} if $\dom f :=\menge{x\in X}{f(x) <+\infty}\neq \varnothing$ and \emph{lower semicontinuous} if $\forall x\in \dom f$, $f(x)\leq \liminf_{z\to x} f(z)$.
The function $f$ is said to be \emph{$\alpha$-convex} for some $\alpha\in \RR$ if $\forall x,y\in\dom f$, $\forall\kappa\in \left]0,1\right[$,
\begin{equation}
f((1-\kappa) x+\kappa y) +\frac{\alpha}{2}\kappa(1-\kappa)\|x-y\|^2\leq (1-\kappa)f(x)+\kappa f(y).
\end{equation}
We say that $f$ is \emph{convex} if $\alpha =0$, \emph{strongly convex} if $\alpha >0$, and \emph{weakly convex} if $\alpha <0$.

Let $f\colon X\to \left]-\infty,+\infty\right]$ be proper. The \emph{Fr\'echet subdifferential} of $f$ at $x$ is given by 
\begin{equation}
\widehat{\partial}f(x) :=\Menge{u\in X}{\liminf_{z\to x}\frac{f(z)-f(x)-\scal{u}{z-x}}{\|z-x\|}\geq 0}.
\end{equation}
It is known that if $f$ is differentiable at $x$, then $\widehat{\partial}f(x) =\{\nabla f(x)\}$, and that if $f$ is convex, then the Fr\'echet subdifferential coincides with the \emph{convex subdifferential}, i.e.,
\begin{equation}
\widehat{\partial}f(x) =\partial f(x) :=\menge{u\in X}{\forall z\in X,\ f(z)-f(x)\geq \scal{u}{z-x}}.
\end{equation}
The \emph{proximity operator} of $f$ with parameter $\gamma\in \RPP$ is the mapping $\prox_{\gamma f}\colon X\rightrightarrows X$ defined by
\begin{equation}
\forall x\in X, \quad \prox_{\gamma f}(x) :=\argmin_{z\in X}\left(f(z)+\frac{1}{2\gamma}\|z-x\|^2\right).
\end{equation}
Given a nonempty closed subset $C$ of $X$, the \emph{indicator function} $\iota_C$ of $C$ is defined by $\iota_C(x) =0$ if $x\in C$ and $\iota_C(x) =+\infty$ if $x\notin C$.
It is clear that $\prox_{\gamma \iota_C} =P_C$, where $P_C\colon X\rightrightarrows C$ is the \emph{projector} onto $C$ given by 
\begin{equation}
\forall x\in X,\quad P_Cx :=\argmin_{c\in C} \|x-c\|.
\end{equation}
If $C$ is convex, then the \emph{normal cone} to $C$ is the operator $N_C\colon X\rightrightarrows X$ defined by 
\begin{equation}
\forall x\in X, \quad N_C(x) :=\begin{cases}
\menge{u\in X}{\forall c\in C,\ \scal{u}{c-x}\leq 0} &\text{if~} x\in C,\\
\varnothing &\text{otherwise}.
\end{cases}
\end{equation}

\begin{lemma}
\label{l:resol=prox}
Let $f\colon X\to \left]-\infty,+\infty\right]$ and $g\colon X\to \left]-\infty,+\infty\right]$ be proper, lower semicontinuous, and respectively $\alpha$- and $\beta$-convex, let $\omega\in \RPP$, and let $r\in \ran(\Id+\omega(\widehat{\partial}f+\widehat{\partial}g))$. Suppose that $\alpha+\beta >-1/\omega$. Then 
\begin{equation}
\label{e:resol=prox}
J_{\omega(\widehat{\partial}f+\widehat{\partial}g)}(r)= J_{\omega\widehat{\partial}(f+g)}(r) =\prox_{\omega(f+g)}(r).
\end{equation}
Consequently, if $C$ and $D$ are closed convex subsets of $X$ with $C\cap D\neq \varnothing$ and $r\in \ran(\Id+N_C+N_D)$, then 
\begin{equation}
J_{N_C+N_D}(r) =P_{C\cap D}(r).
\end{equation}
\end{lemma}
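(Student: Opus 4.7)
The plan is to reduce every claim to the single observation that, under $\alpha+\beta>-1/\omega$, the function
\[
h(z):=f(z)+g(z)+\frac{1}{2\omega}\|z-r\|^2
\]
is proper, lower semicontinuous, and \emph{strongly} convex with modulus $\alpha+\beta+1/\omega>0$, so it has a unique minimizer which, by definition, is $\prox_{\omega(f+g)}(r)$.

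First, I would handle the middle equality $J_{\omega\widehat{\partial}(f+g)}(r)=\{\prox_{\omega(f+g)}(r)\}$. Since $h$ is strongly convex, hence convex, its Fr\'echet subdifferential coincides with its convex subdifferential, and the Fermat rule is sharp: $x$ minimizes $h$ iff $0\in\widehat{\partial}h(x)$. Using the smooth sum rule for the Fr\'echet subdifferential applied to the Fr\'echet-differentiable term $\tfrac{1}{2\omega}\|\cdot-r\|^2$ (whose gradient at $x$ is $\tfrac{1}{\omega}(x-r)$), I get
\[
\widehat{\partial}h(x)=\widehat{\partial}(f+g)(x)+\tfrac{1}{\omega}(x-r),
\]
so $0\in\widehat{\partial}h(x)\iff r\in x+\omega\widehat{\partial}(f+g)(x)\iff x\in J_{\omega\widehat{\partial}(f+g)}(r)$. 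Since $h$ has a unique minimizer, the set on the right is the singleton $\{\prox_{\omega(f+g)}(r)\}$.

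Next, I would handle the first equality via the (always valid) Fr\'echet sum-rule inclusion $\widehat{\partial}f(x)+\widehat{\partial}g(x)\subseteq\widehat{\partial}(f+g)(x)$. This yields
\[
J_{\omega(\widehat{\partial}f+\widehat{\partial}g)}(r)\subseteq J_{\omega\widehat{\partial}(f+g)}(r)=\{\prox_{\omega(f+g)}(r)\}.
\]
The hypothesis $r\in\ran(\Id+\omega(\widehat{\partial}f+\widehat{\partial}g))$ guarantees the left-hand set is nonempty, forcing equality across \eqref{e:resol=prox}.

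Finally, for the corollary, I would specialize to $f=\iota_C$, $g=\iota_D$, $\omega=1$, so $\alpha=\beta=0$ and $\alpha+\beta>-1$ is automatic. Both indicator functions are proper, lsc, and convex, so $\widehat{\partial}\iota_C=\partial\iota_C=N_C$ and likewise for $D$; since $C\cap D\neq\varnothing$ we have $\iota_C+\iota_D=\iota_{C\cap D}$, whence $\prox_{\iota_C+\iota_D}=\prox_{\iota_{C\cap D}}=P_{C\cap D}$. The range hypothesis becomes exactly $r\in\ran(\Id+N_C+N_D)$, and \eqref{e:resol=prox} delivers $J_{N_C+N_D}(r)=P_{C\cap D}(r)$. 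The main technical point to get right is the smooth sum rule together with the Fermat characterization for the strongly convex (but weakly convex summands) function $h$; everything else is bookkeeping.
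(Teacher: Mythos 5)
Your proposal is correct and follows essentially the same route as the paper: both arguments combine the Fr\'echet sum-rule inclusion $\widehat{\partial}f+\widehat{\partial}g\subseteq\widehat{\partial}(f+g)$ (giving one containment of resolvents) with the fact that $J_{\omega\widehat{\partial}(f+g)}=\prox_{\omega(f+g)}$ is single-valued because $\alpha+\beta+1/\omega>0$, and then use the range hypothesis to force equality, before specializing to indicator functions. The only difference is that where the paper cites \cite[Lemmas~5.2--5.3]{DP18} for the middle identity, you prove it directly via strong convexity of the perturbed function $h$ together with the Fermat and smooth-sum rules --- a valid self-contained substitute, provided you note that properness of $h$ (i.e., $\dom f\cap\dom g\neq\varnothing$) follows from the assumption $r\in\ran(\Id+\omega(\widehat{\partial}f+\widehat{\partial}g))$.
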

\begin{proof}
On the one hand, noting that $\ran(\Id+\omega(\widehat{\partial}f+\widehat{\partial}g)) =\dom J_{\gamma(\widehat{\partial}f+\widehat{\partial}g)}$ and that $\widehat{\partial}f+\widehat{\partial}g\subseteq \widehat{\partial}(f+g)$, we have for any $p\in X$ that
\begin{subequations}
\label{e:noCQ}
\begin{align}
p\in J_{\omega(\widehat{\partial}f+\widehat{\partial}g)}(r) 
&\iff r\in p+\omega(\widehat{\partial}f+\widehat{\partial}g)(p)\\
&\implies r\in p+\omega\widehat{\partial}(f+g)(p)\\
&\iff p\in J_{\widehat{\partial}(f+g)}(r). 
\end{align}
\end{subequations} 
On the other hand, it follows from, e.g., \cite[Lemma~5.3]{DP18} that $f+g$ is $(\alpha+\beta)$-convex. Since $1+\omega(\alpha+\beta) >0$, we learn from \cite[Lemma~5.2]{DP18} that $\prox_{\omega(f+g)} =J_{\omega\widehat{\partial}(f+g)}$ is single-valued and has full domain. Combining with \eqref{e:noCQ} implies \eqref{e:resol=prox}.

Now, since $C$ and $D$ are closed convex sets, $\iota_C$ and $\iota_D$ are convex functions, and therefore, $\widehat{\partial}\iota_C =\partial\iota_C =N_C$ and $\widehat{\partial}\iota_D =\partial\iota_D =N_D$. Applying \eqref{e:resol=prox} to $(f,g) =(\iota_C,\iota_D)$ and $\omega =1$ yields
\begin{equation}
J_{N_C+N_D}(r) =\prox_{\iota_C+\iota_D}(r) =\prox_{\iota_{C\cap D}}(r) =P_{C\cap D}(r),
\end{equation} 
which completes the proof. 
\end{proof}

We now derive some applications of Theorem~\ref{t:resol}. In what follows, $\theta\in \RPP$ and $q\in X$.
  
\begin{corollary}[proximity operator of sum of $\alpha$- and $\beta$-convex functions]
\label{c:prox}
Let $f\colon X\to \left]-\infty,+\infty\right]$ and $g\colon X\to \left]-\infty,+\infty\right]$ be proper, lower semicontinuous, and respectively $\alpha$- and $\beta$-convex, let $\omega\in \RPP$, and let $r\in \ran(\Id+\omega(\widehat{\partial}f+\widehat{\partial}g))$. Suppose that $\alpha+\beta >-1/\omega$ and let $(\sigma,\tau)\in \RR^2$ and $(r_f, r_g)\in X^2$ be such that $\sigma+\tau =\theta/\omega$, $r_f+r_g =(q+r)/\omega$,
\begin{equation}
\theta\sigma+\alpha>0 \quad\text{and}\quad \theta\beta+\tau\geq 0.
\end{equation}
Let $\gamma\in \RPP$ be such that $1+\gamma\sigma >0$ and $1+\gamma\tau >0$. Given any $\kappa\in \left]0,1\right]$ and $x_0\in X$, define the sequence $(x_n)_\nnn$ by 
\begin{equation}
\forall\nnn, \quad x_{n+1} :=(1-\kappa)x_n +\kappa R_gR_f x_n,
\end{equation} 
where
\begin{subequations}
\begin{align}
R_f &:=\frac{2}{\theta}\left(\prox_{\frac{\gamma\theta}{1+\gamma\sigma}f}\circ \left(\frac{\theta}{1+\gamma\sigma}\Id+\frac{\gamma\theta}{1+\gamma\sigma}r_f-q\right) +q\right) -\Id\\ 
\text{~and~}
R_g &:=\frac{2}{\theta}\left(\prox_{\frac{\gamma\theta}{1+\gamma\tau}g}\circ \left(\frac{\theta}{1+\gamma\tau}\Id+\frac{\gamma\theta}{1+\gamma\tau}r_g-q\right) +q\right) -\Id.
\end{align}
\end{subequations}
Then $\prox_{\omega(f+g)}(r) =\prox_{\frac{\gamma\theta}{1+\gamma\sigma}f}\left(\frac{\theta}{1+\gamma\sigma}\overline{x}+\frac{\gamma\theta}{1+\gamma\sigma}r_f-q\right)$ with $\overline{x}\in \Fix R_gR_f$ and the following hold:
\begin{enumerate}
\item
$\left(\prox_{\frac{\gamma\theta}{1+\gamma\sigma}f}\left(\frac{\theta}{1+\gamma\sigma}x_n+\frac{\gamma\theta}{1+\gamma\sigma}r_f-q\right)\right)_\nnn$ converges strongly to $\prox_{\omega(f+g)}(r)$.
\item 
If $\kappa <1$, then $(x_n)_\nnn$ converges weakly to $\overline{x}$.
\item
If $f$ is differentiable with Lipschitz continuous gradient, then the above convergences are linear.
\end{enumerate}  
\end{corollary}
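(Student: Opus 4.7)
The plan is to derive Corollary~\ref{c:prox} as a specialization of Theorem~\ref{t:resol} to the operators $A := \widehat{\partial}f$ and $B := \widehat{\partial}g$, using Lemma~\ref{l:resol=prox} together with the identity $J_{\gamma\widehat{\partial}h} = \prox_{\gamma h}$ (valid for any proper lower semicontinuous weakly convex $h$ under the admissible choice of $\gamma$) to translate between resolvents of subdifferentials and proximity operators.

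First I would verify that $\widehat{\partial}f$ and $\widehat{\partial}g$ are maximally $\alpha$- and $\beta$-monotone, respectively. This is a standard consequence of properness, lower semicontinuity, and $\alpha$- (resp.\ $\beta$-) convexity, and in the weakly convex regime it can be obtained by writing $f - \frac{\alpha}{2}\|\cdot\|^2$ as a proper lower semicontinuous convex function, using the classical maximal monotonicity of its Moreau subdifferential, and invoking Proposition~\ref{p:transform}\ref{p:transform_mono}; see also \cite[Lemma~5.2]{DP18}. With this in place, the hypothesis $\alpha+\beta > -1/\omega$, together with the constraints on $(\sigma,\tau)$, $(r_f,r_g)$, and $\gamma$, matches exactly the hypotheses of Theorem~\ref{t:resol} under the identification $(r_A,r_B) := (r_f,r_g)$.

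Next, invoking $J_{\gamma\widehat{\partial}f} = \prox_{\gamma f}$ (and the analogous identity for $g$), the explicit formulas \eqref{e:explicit} for $J_{\gamma A_\sigma}$ and $J_{\gamma B_\tau}$ rewrite exactly as the bracketed parts of the definitions of $R_f$ and $R_g$, so that $R_f = 2J_{\gamma A_\sigma}-\Id$ and $R_g = 2J_{\gamma B_\tau}-\Id$. Hence the iteration $x_{n+1} = (1-\kappa)x_n + \kappa R_g R_f x_n$ is precisely \eqref{e:x+}. Combining with Lemma~\ref{l:resol=prox} gives $J_{\omega(\widehat{\partial}f+\widehat{\partial}g)}(r) = \prox_{\omega(f+g)}(r)$, and substituting this into the conclusion of Theorem~\ref{t:resol} yields the claimed prox-based representation with $\overline{x} \in \Fix R_g R_f$, as well as parts (i) and (ii) verbatim from Theorem~\ref{t:resol}\ref{t:resol_strong}--\ref{t:resol_weak}. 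For (iii), if $f$ is differentiable with $\ell$-Lipschitz gradient, then $\widehat{\partial}f = \{\nabla f\}$ is single-valued and $\ell$-Lipschitz, which is exactly the Lipschitz hypothesis on $A$ required by Theorem~\ref{t:resol}\ref{t:resol_linear}.

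The main obstacle, essentially the only non-bookkeeping step, is establishing maximal $\alpha$-monotonicity of $\widehat{\partial}f$ when $\alpha < 0$; once that structural fact is quoted, the remainder of the argument reduces to substituting $\widehat{\partial}f$ and $\widehat{\partial}g$ for $A$ and $B$ in Theorem~\ref{t:resol} and rewriting resolvents as proxes via Lemma~\ref{l:resol=prox}.
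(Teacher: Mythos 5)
Your proposal is correct and follows essentially the same route as the paper: both reduce the statement to Theorem~\ref{t:resol} applied to $(A,B)=(\widehat{\partial}f,\widehat{\partial}g)$, quoting \cite[Lemma~5.2]{DP18} for the maximal $\alpha$- and $\beta$-monotonicity of the subdifferentials and for the identity $J_{\gamma\widehat{\partial}h}=\prox_{\gamma h}$, and both use Lemma~\ref{l:resol=prox} to identify $J_{\omega(\widehat{\partial}f+\widehat{\partial}g)}(r)$ with $\prox_{\omega(f+g)}(r)$. Your extra remarks (the Moreau-shift argument for maximal monotonicity and the observation that a Lipschitz gradient makes $\widehat{\partial}f=\{\nabla f\}$ Lipschitz for part (iii)) only make explicit what the paper leaves to the cited references.
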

\begin{proof}
By assumption, \cite[Lemma~5.2]{DP18} implies that $\widehat{\partial}f$ and $\widehat{\partial}g$ are respectively maximally $\alpha$- and $\beta$-monotone, and that
\begin{equation}
J_{\frac{\gamma}{1+\gamma\sigma}\widehat{\partial}f} =\prox_{\frac{\gamma}{1+\gamma\sigma}f} \quad\text{and}\quad J_{\frac{\gamma}{1+\gamma\tau}\widehat{\partial}g} =\prox_{\frac{\gamma}{1+\gamma\tau}g}.
\end{equation}  
Next, Lemma~\ref{l:resol=prox} implies that $J_{\omega(\widehat{\partial}f+\widehat{\partial}g)}(r) =\prox_{\omega(f+g)}(r)$. 
The conclusion then follows from Theorem~\ref{t:resol} applied to $(A,B) =(\widehat{\partial}f,\widehat{\partial}g)$. 
\end{proof}

\begin{corollary}[projection onto intersection of two closed convex sets]
\label{c:bestapprox}
Let $C$ and $D$ be closed convex subsets of $X$ with $C\cap D\neq \varnothing$ and let $r\in \ran(\Id+N_C+N_D)$. Let $\sigma\in \RPP$, $\tau\in \RP$, and $(r_C,r_D)\in X^2$ with $r_C+r_D=(\sigma+\tau)(q+r)/\theta$. Let also $\gamma\in \RPP$, $\kappa\in \left]0,1\right]$, $x_0\in X$, and define the sequence $(x_n)_\nnn$ by
\begin{equation}
\forall\nnn, \quad x_{n+1} :=(1-\kappa)x_n +\kappa\bar{R}_D\bar{R}_C x_n,
\end{equation} 
where
\begin{subequations}
\begin{align}
\bar{R}_C &:=\frac{2}{\theta}\left(P_C\circ\left(\frac{\theta}{1+\gamma\sigma}\Id+\frac{\gamma\theta}{1+\gamma\sigma}r_C-q\right) +q\right) -\Id\\ 
\text{and~}
\bar{R}_D &:=\frac{2}{\theta}\left(P_D\circ\left(\frac{\theta}{1+\gamma\tau}\Id+\frac{\gamma\theta}{1+\gamma\tau}r_D-q\right) +q\right) -\Id.
\end{align}
\end{subequations}
Then $\left(P_C\left(\frac{\theta}{1+\gamma\sigma}x_n+\frac{\gamma\theta}{1+\gamma\sigma}r_C-q\right)\right)_\nnn$ converges strongly to $P_{C\cap D}(r) =P_C\left(\frac{\theta}{1+\gamma\sigma}\overline{x}+\frac{\gamma\theta}{1+\gamma\sigma}r_C-q\right)$ with $\overline{x}\in \Fix \bar{R}_D\bar{R}_C$. Furthermore, if $\kappa <1$, then $(x_n)_\nnn$ converges weakly to $\overline{x}$.
\end{corollary}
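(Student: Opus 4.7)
The plan is to specialize Corollary~\ref{c:prox} to indicator functions. I would set $f := \iota_C$ and $g := \iota_D$; these are proper, lower semicontinuous, and convex, so they are $\alpha$- and $\beta$-convex with $\alpha = \beta = 0$, which trivially satisfies $\alpha + \beta = 0 > -1/\omega$ for every $\omega \in \RPP$. I would then pick $\omega := \theta/(\sigma + \tau) \in \RPP$, so that $\sigma + \tau = \theta/\omega$, and set $r_f := r_C$, $r_g := r_D$. Under this choice, the hypothesis $r_C + r_D = (\sigma+\tau)(q+r)/\theta$ becomes $r_f + r_g = (q+r)/\omega$, matching the parameter relation required in Corollary~\ref{c:prox}. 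Since $\sigma > 0$, $\tau \geq 0$, and $\gamma > 0$, the sign conditions $\theta\alpha + \sigma > 0$, $\theta\beta + \tau \geq 0$, $1 + \gamma\sigma > 0$, and $1 + \gamma\tau > 0$ are all automatic.

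Next I would translate the hypothesis on $r$. Since normal cones are cones, $\omega N_C = N_C$ and $\omega N_D = N_D$ for every $\omega \in \RPP$, hence $\ran(\Id + \omega(\widehat{\partial}\iota_C + \widehat{\partial}\iota_D)) = \ran(\Id + N_C + N_D)$, so the assumption $r \in \ran(\Id + N_C + N_D)$ gives what Corollary~\ref{c:prox} needs. Likewise, for every $\lambda \in \RPP$, $\prox_{\lambda \iota_C} = P_C$ and $\prox_{\lambda \iota_D} = P_D$, so the reflectors $R_f$ and $R_g$ appearing in Corollary~\ref{c:prox} coincide exactly with $\bar{R}_C$ and $\bar{R}_D$, and the shadow sequence $\prox_{\frac{\gamma\theta}{1+\gamma\sigma}f}\bigl(\frac{\theta}{1+\gamma\sigma}x_n + \frac{\gamma\theta}{1+\gamma\sigma}r_f - q\bigr)$ becomes $P_C\bigl(\frac{\theta}{1+\gamma\sigma}x_n + \frac{\gamma\theta}{1+\gamma\sigma}r_C - q\bigr)$.

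Finally, I would identify the limit: by Lemma~\ref{l:resol=prox} applied to $(\iota_C, \iota_D)$ with $\omega = 1$, or equivalently by the fact that $\omega(\iota_C + \iota_D) = \iota_{C\cap D}$ regardless of $\omega \in \RPP$, one has $\prox_{\omega(\iota_C + \iota_D)}(r) = P_{C \cap D}(r)$. Invoking Corollary~\ref{c:prox} then delivers the strong convergence of the shadow sequence to $P_{C\cap D}(r)$ and, when $\kappa < 1$, the weak convergence of $(x_n)_\nnn$ to $\overline{x} \in \Fix \bar{R}_D \bar{R}_C$.

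There is essentially no substantial obstacle: the argument is a direct substitution into Corollary~\ref{c:prox}. The only care needed is to observe that the positive-homogeneity of indicator functions and normal cones makes the scale parameter $\omega$ drop out, which is exactly what permits the statement of Corollary~\ref{c:bestapprox} to be given in the flexible form $r_C + r_D = (\sigma+\tau)(q+r)/\theta$ without referencing $\omega$ explicitly; the Lipschitz clause of Corollary~\ref{c:prox} is vacuous here since indicators are not differentiable on any nontrivial set, and consequently the linear convergence statement is omitted from the conclusion.
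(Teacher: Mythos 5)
Your proof is correct and follows essentially the same route as the paper: the paper applies Theorem~\ref{t:resol} directly to $(A,B)=(N_C,N_D)$ with $\omega=\theta/(\sigma+\tau)$, whereas you pass through Corollary~\ref{c:prox} with $(f,g)=(\iota_C,\iota_D)$ --- but since $\widehat{\partial}\iota_C=N_C$ and Corollary~\ref{c:prox} is itself Theorem~\ref{t:resol} applied to subdifferentials, the two arguments coincide. All the key points (positive homogeneity of normal cones and indicators making $\omega$ drop out, $\prox_{\lambda\iota_C}=P_C$ for every $\lambda\in\RPP$, and Lemma~\ref{l:resol=prox} to identify the limit as $P_{C\cap D}(r)$) match the paper's argument.
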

\begin{proof}
We first derive from \cite[Example~20.26]{BC17} that $N_C$ and $N_D$ are maximally monotone and from \cite[Example~23.4]{BC17} that
\begin{equation}
J_{\frac{\gamma}{1+\gamma\sigma}N_C} =J_{N_C} =P_C \quad\text{and}\quad J_{\frac{\gamma}{1+\gamma\tau}N_D} =J_{N_D} =P_D.
\end{equation} 
Setting $\omega :=\theta/(\sigma+\tau) >0$, we note that $r\in \ran(\Id+N_C+N_D) =\ran\big(\Id+\omega(N_C+N_D)\big)$ and from Lemma~\ref{l:resol=prox} that $J_{\omega(N_C+N_D)}(r) =J_{N_C+N_D}(r) =P_{C\cap D}(r)$. Now apply Theorem~\ref{t:resol} to $(A,B) =(N_C,N_D)$.
\end{proof}

As in the proof of Corollary~\ref{c:q=-r}, if we choose $\theta =\frac{1}{\eta}$ (with $\eta\in \left]0,1\right[$), $\sigma =\tau =\frac{1-\eta}{\gamma\eta}$, $q =-r$, and $r_C =r_D =0$, then Corollary~\ref{c:bestapprox} reduces to \cite[Corollary~3.1]{AC18}.

\section*{Acknowledgement}
MND was partially supported by Discovery Project 160101537 from the Australian Research Council. HMP was partially supported by a Research Fund from Autodesk, Inc.

\end{document}